\documentclass[a4paper, reqno,12pt]{amsart}

\usepackage{tikz, tikz-3dplot}
\usepackage{amsmath, amsthm, amssymb,graphics }

\theoremstyle{plain}
\newtheorem{te}{Theorem}[section]
\newtheorem{lem}[te]{Lemma}
\newtheorem{co}[te]{Corollary}
\newtheorem{pr}[te]{Proposition}
\newtheorem{de}[te]{Definition}

\theoremstyle{remark}
\newtheorem{re}[te]{Remark}
\newtheorem*{ack*}{Acknowledgment}

\textwidth16.5cm
\topmargin0cm
\oddsidemargin0cm
\evensidemargin0cm
\textheight22.5cm

\def\0{{\bf 0}}

\def\R{{\mathbb R}}

\def\C{{\mathbb C}}

\def\Z{{\mathbb Z}}

\def\V{{\mathbb V}}

\def\sp{{\operatorname{span}}}

\def\supp{{\operatorname{supp}\,}}
\def\nint{\mathop{\diagup\kern-13.0pt\int}}

\def\Ic{{\mathcal I}}\def\Fc{{\mathcal F}}
\def\Cc{{\mathcal C}}\def\Dc{{\mathcal D}}

\def\Sc{{\mathcal S}}\def\Vc{{\mathcal V}}

\begin{document}

\title[The Walsh--Fourier transform]{The failure of the fractal uncertainty principle for the Walsh--Fourier transform}

\author{Ciprian Demeter}
\address{Department of Mathematics, Indiana University, 831 East 3rd St., Bloomington IN 47405}
\email{demeterc@indiana.edu}

\keywords{}
\thanks{The author is  partially supported by the  Research NSF grant DMS-1800305}
%\thanks{ AMS subject classification: Primary }

\dedicatory{To the memory of Jean Bourgain}
\begin{abstract}
We construct $\delta$-regular sets with $\delta\ge \frac12$ for which the analog of the Bourgain--Dyatlov Fractal Uncertainty Principle fails for the Walsh--Fourier transform.
\end{abstract}
%\dedicatory{To the memory of Jean Bourgain}
\maketitle

\section{The Fractal Uncertainty principle for the Fourier transform}
\bigskip

This note explores the so-called Fractal Uncertainty principle, one of the last signifficant results of Jean Bourgain. The principle is a fundamental result in Fourier analysis with far-reaching consequences  in the spectral theory of hyperbolic surfaces.

\begin{de}
	\label{l9}
Let $X\subset \R$ be a nonempty closed set. Consider the constants $\delta\in [0,1)$, $C_R\ge 1$ and $0\le \alpha_0\le \alpha_1\le \infty$. We say that $X$ is $\delta$-regular with constant $C_R$ on scales $\alpha_0$ to $\alpha_1$ if there is a Borel measure $\mu_X$ supported on $X$ such that
\\
\\
$\bullet$ for each interval $I$ of size $|I|\in [\alpha_0,\alpha_1]$, we have $\mu_X(I)\le C_R|I|^\delta$
\\
\\
$\bullet$ if additionally $I$ is centered at a point in $X$, then $\mu_X(I)\ge C_R^{-1}|I|^{\delta}$.
\end{de}
We will denote by $|X|$ the Lebesgue measure of $X$.
\smallskip

Examples of regular sets will be discussed in Section \ref{s3}. At this point, we only mention that $\delta$-regular sets need to have small Lebesgue measure, more precisely (see Lemma 2.9 in \cite{BD2})
\begin{equation}
\label{l7}
|X|\le 24C_R^2\alpha_1^\delta\alpha_0^{1-\delta}.
\end{equation}

The following Fractal Uncertainty principle for the Fourier transform
$$\widehat{f}(\xi)=\int_\R f(x)e^{-2\pi i x\xi}dx$$
was proved in \cite{BD2}. It refines earlier versions due to Dyatlov-Zahl \cite{DZ} and Bourgain--Dyatlov \cite{BD1}.

\begin{te}
	\label{BD}
Let $\delta\in [0,1)$, $C_R\ge 1$ and $N\ge 1$. Assume that
\\
\\
$\bullet$ $X\subset [0,1]$ is $\delta$-regular with constant $C_R$ on scales $\frac1N$ to 1
\\
$\bullet$ $Y\subset [0,N]$ is $\delta$-regular with constant $C_R$ on scales $1$ to N.
\\
\\
Then there exist constants $\beta>0$ and $C$, both depending only on $\delta$ and  $C_R$, such that for each $f\in L^2(\R)$ with Fourier transform  supported on $Y$ we have
\begin{equation}
\label{l10}
\|f\|_{L^2(X)}\le CN^{-\beta}\|f\|_{L^2(\R)}.
\end{equation}
\end{te}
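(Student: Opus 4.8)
The plan is to follow the semiclassical strategy of Bourgain--Dyatlov.

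\emph{Step 1 (semiclassical reformulation).} Rescaling $Y$ by $\frac1N$, one reduces to the situation where $X$ and $Y$ are both subsets of $[0,1]$ that are $\delta$-regular with constant $C_R$ on scales $h:=\frac1N$ to $1$, and \eqref{l10} becomes the operator-norm bound
\begin{equation*}
A(h;X,Y):=\big\|\,\mathbf 1_X\,\mathcal F_h^{-1}\,\mathbf 1_Y\,\big\|_{L^2(\R)\to L^2(\R)}\le Ch^\beta,\qquad
\mathcal F_h g(\xi)=(2\pi h)^{-1/2}\!\int_\R e^{-ix\xi/h}g(x)\,dx ,
\end{equation*}
with $C,\beta$ depending only on $\delta,C_R$; write $A(h)$ for the supremum over all admissible pairs. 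If $\delta<\frac12$ this already follows from one line: a function $v$ with $\mathcal F_h$-spectrum in $Y$ satisfies, by Hausdorff--Young and Cauchy--Schwarz, $\|v\|_{L^\infty(\R)}^2\lesssim h^{-1}|Y|\,\|v\|_{L^2(\R)}^2$, hence $\|v\|_{L^2(X)}^2\le|X|\,\|v\|_{L^\infty(\R)}^2\lesssim h^{-1}|X||Y|\,\|v\|_{L^2(\R)}^2$, and by \eqref{l7} (applied on scales $h$ to $1$) the right side is $\lesssim h^{\,1-2\delta}\|v\|_{L^2(\R)}^2$ with implied constant depending only on $\delta,C_R$; thus $A(h)\lesssim h^{(1-2\delta)/2}$ and \eqref{l10} holds with $\beta=\tfrac{1-2\delta}{2}$. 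So the real content is the range $\delta\ge\frac12$, which I would handle in two further steps.

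\emph{Step 2 (submultiplicativity from self-similarity).} Write $h=h_1h_2$ and partition $[0,1]$ into the $1/h_1$ intervals $J$ of length $h_1$. For each $J$ meeting $X$, the rescaled set $h_1^{-1}\big((X\cap J)-\min J\big)\subset[0,1]$ is, after minor adjustments to the measure, again $\delta$-regular with constant comparable to $C_R$ on scales $h_2$ to $1$, and similarly for $Y$. Decomposing a function with $\mathcal F_h$-spectrum in $Y$ across these blocks and using the Cooley--Tukey-type factorization of $\mathcal F_h$ (an $h_1$-scale transform followed by $h_2$-scale transforms on the blocks, up to unit-modulus twiddle factors), one obtains a submultiplicative relation $A(h_1h_2)\le C_\#\,A(h_1)\,A(h_2)$, where $C_\#=C_\#(\delta,C_R)$ reflects the interaction of the scale-$h_1$ frequency cutoff with the fine structure of $Y$ (this is the term that Dyatlov--Zahl controlled via the additive energy of the fractal). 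Iterating from a fixed base scale $h_0=h_0(\delta,C_R)$ yields $A(h_0^k)\le (C_\#A(h_0))^k$, hence $A(h)\le Ch^\beta$ with some $\beta=\beta(\delta,C_R)>0$ --- provided one can establish the single-scale bound $C_\#A(h_0)<1$.

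\emph{Step 3 (the single-scale estimate --- the main obstacle).} For a fixed scale $h_0$, Nazarov's quantitative uncertainty principle applied to $X$ and $Y$, which are bounded with complements of positive measure, gives $A(h_0)\le 1-\epsilon_1$ with $\epsilon_1>0$; using $|X|,|Y|\lesssim h_0^{1-\delta}$ one only gets $\epsilon_1\gtrsim \exp\!\big(-C\,h_0^{\,1-2\delta}\big)$ for a constant $C=C(\delta,C_R)$, which for $\delta\ge\frac12$ is far too small to beat the loss $C_\#$ of Step 2, and one cannot rescue this by enlarging $h_0$ since the regularity hypothesis then becomes vacuous. To produce a gap depending only on $\delta,C_R$, I would invoke the Beurling--Malliavin multiplier theorem, following Bourgain--Dyatlov: after dilating frequencies by $1/h_0$, $Y$ becomes a union of about $h_0^{-\delta}$ essentially unit intervals that is very sparse in $[0,1/h_0]$, forcing a Beurling--Malliavin density small relative to $1/h_0$, so one can construct an entire function $\Phi$ adapted to $Y$ --- of exponential type $\ll 1/h_0$, comparable to $1$ on a neighbourhood of the rescaled set, but decaying faster than any fixed power of $h_0$ off it. Using $\Phi$ as an almost-multiplier in frequency, one shows that any $f$ with $\mathcal F_{h_0}$-spectrum in $Y$ must be spread out in physical space at a scale incompatible with concentration on the porous set $X$, which upgrades $\epsilon_1$ to a constant depending only on $\delta,C_R$ and closes Step 2. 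Making the Beurling--Malliavin construction effective --- controlling the exponential type of $\Phi$, its lower bound near $Y$, and its decay off $Y$ purely in terms of $\delta$ and $C_R$, and then running the multiplier argument against the regularity of $X$ --- is where essentially all the work lies, and is the step I expect to be the main obstacle.
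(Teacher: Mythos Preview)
The paper does not prove this theorem; it quotes it from Bourgain--Dyatlov \cite{BD2}. The only argument reproduced in the paper is the elementary $\delta<\tfrac12$ case, and your Step~1 is exactly that computation. For $\delta\ge\tfrac12$ the paper singles out one mechanism as the heart of the \cite{BD2} proof: the unique continuation estimate recorded as Lemma~\ref{l8}, which controls $\sum_I\|f\|_{L^2(I)}^2$ by $\sum_I\|f\|_{L^2(I'')}^2$ at the price of an exponential weight $e^{2\pi r|\xi|}$ on the Fourier side. That lemma, combined with the multi-scale structure of regular sets, is what drives the iteration in \cite{BD2}; your outline does not invoke it at all.

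Your Steps~2--3 take a genuinely different route, and each has a real gap. In Step~2 the ``Cooley--Tukey-type factorization of $\mathcal F_h$'' is not an actual identity for the continuous Fourier transform: there is no decomposition of $\mathcal F_{h_1h_2}$ into block-diagonal $h_2$-scale transforms preceded by an $h_1$-scale transform up to twiddle phases, so the clean submultiplicativity $A(h_1h_2)\le C_\#A(h_1)A(h_2)$ is not available in this form. What Dyatlov--Zahl actually prove is an iteration inequality whose loss constant is governed by the \emph{additive energy} of the regular set, and that route only yields $\beta>0$ for $\delta$ in a neighbourhood of $\tfrac12$ (the pressure-gap regime), not for all $\delta\in[\tfrac12,1)$. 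In Step~3 you reach for the Beurling--Malliavin multiplier construction; that is the engine of \cite{BD1}, where one of the two sets is merely assumed to have Fourier dimension $<1$, and the conclusion and quantitative dependence are different from what \cite{BD2} proves. Even granting an effective Beurling--Malliavin multiplier for $Y$, it is not clear how to convert it into $C_\#A(h_0)<1$ with your uncontrolled $C_\#$, since $C_\#$ was introduced precisely to absorb the cross terms you have not estimated. In short: for $\delta<\tfrac12$ you match the paper; for $\delta\ge\tfrac12$ the paper (via \cite{BD2}) relies on the analytic unique continuation Lemma~\ref{l8}, whereas your hybrid of \cite{DZ}-style submultiplicativity and \cite{BD1}-style Beurling--Malliavin neither reproduces that mechanism nor, as written, closes on its own.
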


When $\delta<\frac12$, this theorem has an easy proof that also provides an explicit value for $\beta$. For reader's convenience, we recall this argument below.
 If $\widehat{f}$ is supported on $Y$ we have
\begin{align*}
\|f\|_{L^2(X)}&\le |X|^{1/2}\|f\|_{L^\infty(\R)}\\&\le |X|^{1/2}\|\widehat{f}\|_{L^1(\R)}\\&=|X|^{1/2}\|\widehat{f}\|_{L^1(Y)}\\&\le |X|^{1/2}|Y|^{1/2}\|\widehat{f}\|_{L^2(\R)}\\&= |X|^{1/2}|Y|^{1/2}\|{f}\|_{L^2(\R)}.
\end{align*}
If $X$ and $Y$ are as in the theorem, then \eqref{l7} implies that $|X|^{1/2}|Y|^{1/2}\le CN^{-\beta}$, $\beta=\frac12-\delta$.
\medskip

On the other hand, the proof from \cite{BD2} in the case $\delta\ge \frac12$ is very involved. At its heart, it relies both on the multi-scale structure of  regular sets, and  on the following unique continuation result (Lemma 3.2 in \cite{BD2}).
\begin{lem}
\label{l8}
Let $\Ic$ be a non overlapping collection of intervals of size 1 and let $c_0>0$. For each $I\in\Ic$, let $I''\subset I$ be an interval of size $c_0$. Then there exists a constant $C$ depending only on $c_0$ such that for all $r\in(0,1)$, $0<\kappa\le e^{-C/r}$ and $f\in L^2(\R)$ with $\widehat{f}$ compactly supported, we have
$$\sum_{I\in\Ic}\|f\|_{L^2(I)}^2\le \frac{C}{r}(\sum_{I\in\Ic}\|f\|_{L^2(I'')}^2)^{\kappa}\|e^{2\pi r|\xi|}\widehat{f}(\xi)\|_{L^2(\R)}^{2(1-\kappa)}.$$
\end{lem}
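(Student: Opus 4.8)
The plan is to turn the compact support of $\widehat f$ into holomorphy and then run a quantitative unique continuation (a two‑constants / Harnack‑chain argument) interval by interval. Write $F(\xi):=e^{2\pi r|\xi|}\widehat f(\xi)\in L^2(\R)$; by Paley--Wiener $f$ extends to an entire function, and Plancherel gives $\|f(\cdot+is)\|_{L^2(\R)}\le\|F\|_{L^2(\R)}$ for all $|s|\le r$. Combined with the sub‑mean value property of $|f|^2$ this controls $f$ pointwise on $\{|\Im z|\le r/2\}$: for $I\in\Ic$ (translated so $I=[0,1]$) and $J_I$ the interval of length $4$ concentric with $I$, one obtains $\sup_{\overline Q}|f|\le M_I$ on $Q:=(-1,2)\times(-r/2,r/2)$, where $M_I^2$ is a fixed multiple of $r^{-1}\sup_{|s|\le r}\|f(\cdot+is)\|_{L^2(J_I)}^2$. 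I will establish the local estimate
\[
\|f\|_{L^2(I\setminus I'')}^2\ \le\ \frac{C}{r}\,\Bigl(\sup_{|s|\le r}\|f(\cdot+is)\|_{L^2(J_I)}^2\Bigr)^{1-\kappa}\,\|f\|_{L^2(I'')}^{2\kappa},\qquad C=C(c_0),
\]
and then sum. Summing uses H\"older with exponents $\tfrac1{1-\kappa},\tfrac1\kappa$, which turns $\sum_I(\cdot)^{1-\kappa}\|f\|_{L^2(I'')}^{2\kappa}$ into $(\sum_I\cdot)^{1-\kappa}\bigl(\sum_I\|f\|_{L^2(I'')}^2\bigr)^{\kappa}$; the $J_I$ have bounded overlap and $\int_\R\sup_{|s|\le r}|f(x+is)|^2\,dx\lesssim\|F\|_{L^2}^2$ (sub‑mean value plus Plancherel again), so $\sum_I\sup_{|s|\le r}\|f(\cdot+is)\|_{L^2(J_I)}^2\lesssim\|F\|_{L^2}^2$. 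Finally $\|f\|_{L^2(I)}^2=\|f\|_{L^2(I'')}^2+\|f\|_{L^2(I\setminus I'')}^2$, and the first term is harmless: $\sum_I\|f\|_{L^2(I'')}^2\le\|f\|_{L^2(\R)}^2\le\|F\|_{L^2}^2$, hence $\bigl(\sum_I\|f\|_{L^2(I'')}^2\bigr)^{1-\kappa}\le\|F\|_{L^2}^{2(1-\kappa)}\le\frac Cr\|F\|_{L^2}^{2(1-\kappa)}$ and therefore $\sum_I\|f\|_{L^2(I'')}^2\le\frac Cr\bigl(\sum_I\|f\|_{L^2(I'')}^2\bigr)^{\kappa}\|F\|_{L^2}^{2(1-\kappa)}$.

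For the local estimate the point is to convert the $L^2$‑smallness of $f$ on the \emph{segment} $I''$ into pointwise smallness on $I\setminus I''$. Let $I'''\subset I''$ be the middle third of $I''$ and work on the slit domain $U:=Q\setminus I'''$, so that $I'''$ becomes a boundary arc while every point of $I\setminus I''$ lies in $U$ at distance $\ge c_0/3$ from the slit. Since $\log|f|$ is subharmonic on $U$ and upper semicontinuous on $\overline U$, $\log|f(w)|\le\int_{\partial U}\log|f|\,d\omega_w^U$ for $w\in U$, and we split $\partial U=\partial Q\cup I'''$. On $\partial Q$ we use $\log|f|\le\log M_I$; on the slit we have no pointwise bound beyond $\log M_I$, but Jensen's inequality turns the $L^2$ datum into the logarithmic‑average bound $\int_{I'''}\log|f|\,dx\le\tfrac{c_0}{3}\log\!\bigl(\|f\|_{L^2(I''')}\sqrt{3/c_0}\bigr)\le\tfrac{c_0}{3}\log\!\bigl(\|f\|_{L^2(I'')}\sqrt{3/c_0}\bigr)$. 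Feeding both in — writing $g:=\log M_I-\log|f|\ge0$ on $\partial U$ and using a uniform lower bound $\tfrac{d\omega_w^U}{dx}\ge c_1$ on $I'''$ valid for $w\in U$ with $\operatorname{dist}(w,I''')\ge c_0/3$ — yields $|f(w)|\le M_I^{1-c_1c_0/3}\bigl(\|f\|_{L^2(I'')}\sqrt{3/c_0}\bigr)^{c_1c_0/3}$; since $\|f\|_{L^2(I'')}\le\sqrt{c_0}\,M_I$ automatically, the exponent may be lowered to any $\kappa\le c_1c_0/3$, and bounding $\|f\|_{L^2(I\setminus I'')}^2\le\sup_{I\setminus I''}|f|^2$ gives the local estimate (in the degenerate regime $\|f\|_{L^2(I'')}>M_I\sqrt{c_0/3}$ one just uses $\|f\|_{L^2(I\setminus I'')}^2\le M_I^2$, which is already of the claimed size).

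The substantive input — and the step I expect to be the real obstacle — is the harmonic‑measure lower bound $\tfrac{d\omega_w^U}{dx}\ge c_1$ on $I'''$, uniform in $r\in(0,1)$ and in $w\in U$ with $\operatorname{dist}(w,I''')\ge c_0/3$, with $c_1\ge c(c_0)\,e^{-C_0/r}$ for an absolute $C_0$; this exponential lower bound is exactly what forces the hypothesis $\kappa\le e^{-C/r}$. The worst case is $w$ at distance $\asymp1$ from the slit inside the thin rectangle $Q$ of height $\asymp r$: reaching $I'''$ then means crossing a channel of width $\asymp r$ and length $\asymp1$, and a Harnack chain of $\asymp1/r$ steps (equivalently, iterating a fixed three‑circles inequality $\asymp1/r$ times) shows the harmonic mass of the slit, hence its density at interior points of $I'''$, is $\gtrsim e^{-C_0/r}$; near the endpoints of the slit the density only increases, and keeping $w$ at distance $c_0/3$ from the slit neutralises the competing concentration of $\omega_w^U$ near the slit itself. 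Choosing $C=C(c_0)$ so large that $e^{-C/r}\le c(c_0)\,c_0\,e^{-C_0/r}/3$ for every $r\in(0,1)$ makes the hypothesis $\kappa\le e^{-C/r}$ imply $\kappa\le c_1c_0/3$, and tracking the constants through the reduction above shows the $C$ in the conclusion depends only on $c_0$. Everything else — the Paley--Wiener extension, the sub‑mean value estimates, Jensen's inequality, and H\"older — is routine.
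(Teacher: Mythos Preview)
The paper does not prove this lemma: it is merely quoted as Lemma~3.2 of Bourgain--Dyatlov \cite{BD2}, with no argument given. So there is no ``paper's own proof'' to compare against. Your sketch is in fact along the same lines as the original argument in \cite{BD2}: one uses the weighted $L^2$ hypothesis on $\widehat f$ to extend $f$ holomorphically to a strip of width $\sim r$ with controlled $L^2$ norm on horizontal lines, converts this via the sub-mean-value inequality to a pointwise bound $M_I$ on a rectangle around each $I$, and then runs a two-constants/harmonic-measure argument that trades $L^2$ smallness on the sub-interval $I''$ against the global bound $M_I$, the exponent being governed by the harmonic measure of the ``known'' set from the ``unknown'' set inside a thin domain. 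The exponential loss $\kappa\le e^{-C/r}$ is exactly the cost of a Harnack chain of length $\sim 1/r$ in a strip of height $\sim r$, as you identify.

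Your outline is essentially correct but is a sketch rather than a proof, and the two places that would need genuine work are precisely the ones you flag. First, the harmonic-measure lower bound on the slit: you need not just $\omega_w^U(I''')\gtrsim e^{-C_0/r}$ but a \emph{pointwise density} lower bound $d\omega_w^U/dx\ge c_1$ on (the two sides of) $I'''$, uniformly for $w\in I\setminus I''$; this follows from the mass bound together with a boundary Harnack principle on the slit (or an explicit conformal map of the slit rectangle), but it is not automatic and deserves a line. Second, the monotonicity step ``the exponent may be lowered to any $\kappa\le c_1c_0/3$'' uses $\|f\|_{L^2(I'')}\sqrt{3/c_0}\le M_I$, which is only $\le\sqrt{3}\,M_I$; this is harmless (it costs a factor $3^{\kappa/2}\le\sqrt3$) but should be said. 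The summation via H\"older and bounded overlap of the $J_I$, and the bound $\int_\R\sup_{|s|\le r}|f(x+is)|^2\,dx\lesssim\|F\|_{L^2}^2$ from sub-mean-value plus Fubini, are routine as you say.
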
	

In the next section we recall the details about the Walsh transform, a closely related, though technically simpler analog of the Fourier transform. We will construct sets $X$ and $Y$ as in Theorem \ref{BD} with regularity $\delta\ge \frac12$, such that the Fractal Uncertainty Principle fails when the Walsh transform replaces the Fourier transform. This fundamental difference between the behavior of the two transforms explains why the proof in \cite{BD2} is so complicated. The argument in \cite{BD2} must necessarily rely not just on the fine  structure of the regular sets, but also on the stronger form of the Uncertainty Principle that governs the Fourier world. This has to do with the fact that there is no (nontrivial) compactly supported function whose Fourier transform is also compactly supported. Lemma \ref{l8} is a manifestation of this principle.

In the next section we will see that there are compactly supported $L^2$ functions whose Walsh transforms are also compactly supported. This easily shows the failure of Lemma \ref{l8}, and ultimately of Theorem \ref{BD}, in the Walsh framework. Our main result, Theorem \ref{main1} is proved in the last section.
\smallskip

\begin{ack*} I am grateful to Semyon Dyatlov for pointing out to me the following facts. 
The Walsh transform  appears in applications to the toy model of open quantum baker's maps. In that case the maps can be Walsh-quantized  and the spectral gap results do sometimes fail on these. This was observed by Nonnenmacher and Zworski in  \cite{NZ} (Section 5, in particular Remark 5.2). In  \cite{DJ} (at the end of the introduction) Dyatlov and  Jin  briefly interpreted this phenomenon as special instances of the failure of the fractal uncertainty principle  for the Walsh--Fourier transform.
\end{ack*}

\section{The Walsh transform}

For more details on the material in this section, the reader may consult the original paper of Walsh \cite{Wal},  or the modern reference \cite{Sch}.

Let $\Z_2=\{0,1\}$ with addition modulo 2 and Haar measure splitting the mass evenly between $0$ and $1$.
We consider the infinite product group $G=\prod_{1}^\infty\Z_2$ equipped with the product Haar measure. This is sometimes referred to as the {\em Cantor group}.
\medskip

Let $\Dc=\{j2^{-i}:\;0\le j\le 2^{i}\}$ be the dyadic numbers in $[0,1]$. They have zero Lebesgue measure. The map
$$\Phi:G\to [0,1],\;\;\;\Phi(a_{-1},a_{-2},\ldots)=\sum_{k\le -1} a_{k}2^{k}$$
is almost bijective -- if $x\in[0,1]\setminus\Dc$,  $\Phi^{-1}(\{x\})$ consists of one point -- measurable, and maps the Haar measure on  $G$ to the Lebesgue measure $|\cdot|$ on [0,1]. This suggests a  natural way to identify $G$ with $([0,1],\oplus, |\cdot|)$, where  $\oplus$ is defined as follows. Given $x,y\in [0,1]\setminus\Dc$, $x=\sum_{k\le -1}x_k2^{k}$, $y=\sum_{k\le -1}y_k2^{k}$, we write
$$x\oplus y=\sum_{k\le -1}c_k2^{k},\;\;\;c_k=x_k+y_k\pmod 2.$$
See Sec 2.2 in \cite{Fol} for details.

\medskip

The characters on $G$ are the so-called Walsh functions. For $n\ge 0$  the $n-$th Walsh function $W_n:[0,1)\to\{-1,1\}$ is defined recursively by the formula
$$W_0=1_{[0,1)}$$
$$W_{2n}(x)=W_n(2x)+W_n(2x-1)$$
$$W_{2n+1}(x)=W_n(2x)-W_n(2x-1).$$
In particular,
$$W_1(x)=\begin{cases}1,\;&0\le x<\frac12\\-1,\;&\frac12\le x<1 \end{cases},$$
$$W_2(x)=\begin{cases}1,\;&x\in[0,\frac14)\cup[\frac12,\frac34)\\-1,\;&x\in[\frac14,\frac12)\cup[\frac34,1)\end{cases}$$
$$W_3(x)=\begin{cases}1,\;&x\in[0,\frac14)\cup[\frac34,1),\\-1,\;&x\in[\frac14,\frac34)\end{cases}.$$
\medskip

In many ways, the functions $W_n$ resemble the (Fourier) system of  exponentials $e^{2\pi inx}$.
For example, the functions $(W_n)_{n\ge 0}$ form an orthonormal basis for $L^2([0,1])$.
See Sec 4.1 \cite{Fol} for more details.
\medskip

The Walsh--Fourier coefficients of a function $f:[0,1]\to\C$ are given by
$$\Fc_Wf(n)=\int f(x)W_n(x)dx,\;\;n\ge 0.$$
To get a greater perspective on the role of the Walsh system and its closeness to the Fourier system of exponentials,
we introduce a new operation. For $x,y\in[0,\infty)$ having unique representations (that is, for Lebesgue almost all pairs $(x,y)$)
$$x=\sum_{k=-\infty}^\infty x_k2^{k},\;\;\; y=\sum_{k=-\infty}^\infty y_k2^{k},$$
we define
 $$x\otimes y:=\sum_{k=-\infty}^\infty c_k2^{k}$$
where
$$c_k=\sum_{j\in\Z}x_jy_{k-j}\pmod 2.$$
We note that this  sum is always finite. From now on, we will implicitly ignore the zero measure dyadic points.
\medskip

Define the function $e_W:[0,\infty)\to\{-1,1\}$ such that $e_W(x)=1$ when $x_{-1}=0$ and $e_W(x)=-1$ when $x_{-1}=1$. This  1-periodic function is the Walsh analogue of $e^{2\pi ix}$. It is easy to check that
\begin{equation}
\label{l11}
W_n(x)=e_W(x\otimes n)1_{[0,1]}(x).
\end{equation}

We may introduce the Walsh (also called Walsh-Fourier) transform of a compactly supported function $f:[0,\infty)\to\C$ to be the function
$$\Fc_Wf:[0,\infty)\to\C,\;\;\;\Fc_Wf(y):=\int_{[0,\infty)} e_W(x\otimes y)f(x)dx.$$
The Walsh--Fourier inversion formula takes the form $\Fc_W\circ\Fc_W=id$. \medskip

It is worth noting that
$$e_W(x\otimes y)=e_W(x\otimes z)$$
whenever $x\in[0,1)$ and $n\le y,z<n+1$. Consequently,  if $f$ is supported on $[0,1]$ then $\Fc_Wf$ is constant on intervals $[n,n+1)$. This explains why for such functions the Walsh--Fourier coefficients completely characterize the function $f$.
\medskip

While the Walsh transform behaves very similar to the Fourier transform, it has one notable feature that makes it easier to work with. This has to do with the fact that there are (plenty of) compactly supported functions whose Walsh transforms are also compactly supported.   A quick computation shows that for each dyadic interval $I=[l2^{k},(l+1)2^{k})$ we have
\begin{equation}
\label{l1}
\Fc_W{1_{I}}(y)=|I|1_{[0,|I|^{-1}]}(y)e(x_I\otimes y),
\end{equation}
where $x_I$ is an arbitrary element of $I$. Because of this feature,  typically the results that hold in the Fourier case are expected to also hold in the Walsh setting, with the argument in the latter case being cleaner, less technical. The approach of first proving results in the Walsh setting and then ``transferring" them to the Fourier world was successfully employed in the time-frequency analysis of modulation invariant operators, starting with \cite{Thi}. The interested reader may consult the survey paper \cite{Dem}, which explores a few different arguments for the Walsh analog of Carleson's Theorem and contains some relevant references.
\medskip

In this paper we present an example that goes against the aforementioned philosophy. We show that a  fundamental result that holds for the Fourier transform is in fact false for the Walsh transform.

\section{The main result}
\label{s3}

The ``textbook" example of regular sets can be constructed as follows. Fix integers $0<M<L$. Let $\Sc$ be a collection of subsets $S$ of $\{0,1,\ldots, L-1\}$ with cardinality $M$. We create a collection of nested sets $X_1,X_2,\ldots$ as follows. Pick $S_1\in\Sc$ and let
$$A_1=L^{-1}S_1,\;\;\;X_1=A_1+[0,L^{-1}].$$
Next, for each $a\in A_1$, choose some $S_{2,a}\in\Sc$ and define
$$A_{2,a}=a+L^{-2}S_{2,a},\;\;\;A_2=\cup_{a\in A_1}A_{2,a},\;\;\;X_2=A_2+[0,L^{-2}].$$
The rest of the construction is recursive. Assume we have constructed $A_j$ and $X_j$ for $1\le j\le n-1$. For each $a\in A_{n-1}$, choose some $S_{n,a}\in\Sc$ and define
$$A_{n,a}=a+L^{-n}S_{n,a},\;\;\;A_n=\cup_{a\in A_{n-1}}A_{n,a},\;\;\;X_n=A_n+[0,L^{-n}].$$
Note that $X_n\subset [0,1]$ consists of $M^n$ intervals $I\in\Ic_{X_n}$ of length $L^{-n}$. Also,  $X_n$ is $\frac{\log M}{\log L}-$regular on scales $\frac1{L^n}$ to $1$, with constant $C_{n}$ satisfying the uniform bound $C_{n}\le C(M,L)$, where $C(M,L)$ depends only on $M,L$. The reader may check that Definition \ref{l9} is satisfied with the measure $\mu_{X_n}$ given by $\mu_{X_n}(I)=\frac1{M^n}$, for each $I\in\Ic_{X_n}$.

 \medskip

We specialize this construction  as follows. Fix the positive integers $m_1$ and $m_2\ge m_1$. We consider a set as above with $M=2^{m_2}$ and $L=2^{m_1+m_2}$. The collection $\Sc$ will consist of only the set $S=\{k2^{m_1},\;0\le k\le 2^{m_2}-1\}$.

More precisely, define
$$A_n=\{\sum_{i=1}^n\frac{k_{n-i+1}2^{m_1}}{L^i}:\;0\le k_1,\ldots,k_n\le 2^{m_2}-1\}$$
and
\begin{equation}
\label{l3}
X_n=A_n+[0,L^{-n}].
\end{equation}
Then $X_n\subset[0,1]$ is  $\frac{m_2}{m_1+m_2}-$regular on scales $L^{-n}$ to $1$, with constant $C_{n}$ uniformly bounded in $n$.

Define also the dilate
$$Y_n=L^nX_n=\{L^nx:\;x\in X_n\}.$$
Note that $Y_n$ is the union of intervals of length $1$ and $Y_n\subset [0,L^n]$. It is $\frac{m_2}{m_1+m_2}-$regular on scales $1$ to $L^n$, with the same constant $C_{n}$ as $X_n$.

\begin{te}
	\label{main1}
The (real) vector space $\Vc_{X_n,Y_n}$ of all $L^2$ functions
$$f:[0,1]\to\R,\;\;\supp f\subset X_n,\;\;\supp \Fc_W f\subset Y_n$$
has dimension at least $2^{n(m_2-m_1)}$. In particular, for each $n\ge 1$ there is a function $f_n$ (other than the zero function) with $\Fc_Wf_n$ supported on $Y_n$ such that
$$\|f_n\|_{L^2(X_n)}=\|f_n\|_{L^2([0,1])}.$$

\end{te}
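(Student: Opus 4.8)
The plan is to exhibit a large supply of explicit functions in $\Vc_{X_n,Y_n}$ by exploiting the dyadic self-similarity of $X_n$ together with the key formula \eqref{l1}. Since $X_n$ is a disjoint union of $M^n=2^{nm_2}$ dyadic intervals $I$ of length $L^{-n}=2^{-n(m_1+m_2)}$, any function supported on $X_n$ can be written as $f=\sum_{I\in\Ic_{X_n}}f_I$ with $\supp f_I\subset I$. By \eqref{l1}, the Walsh transform of $1_I$ is supported on $[0,|I|^{-1}]=[0,L^n]$, so every $f$ built from indicator functions of the $I$'s automatically has $\Fc_Wf$ supported in $[0,L^n]$; the issue is to force the support into the much thinner set $Y_n=L^nX_n$. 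The first step is to understand $\Fc_W$ of a single dyadic interval more precisely: for $I=[l2^{-n(m_1+m_2)},(l+1)2^{-n(m_1+m_2)})$ we have $\Fc_W1_I(y)=L^{-n}1_{[0,L^n]}(y)e_W(x_I\otimes y)$, and the factor $e_W(x_I\otimes y)$ is a Walsh character which, restricted to $y\in[0,L^n]$, ``sees'' only finitely many digits of $y$. Summing over $I$ with coefficients $c_I$ gives a function on $[0,L^n]$ that is constant on the $L^n$ unit subintervals indexed by $0\le l'\le L^n-1$, with value $L^{-n}\sum_I c_I\,e_W(x_I\otimes l')$ — essentially a finite Walsh/Hadamard transform on $(\Z_2)^{n(m_1+m_2)}$ of the coefficient vector.

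The second step is to identify, inside this Hadamard transform, the linear subspace of coefficient vectors $(c_I)$ whose transform vanishes off $Y_n$. Because $Y_n=L^nX_n$ has the same recursive structure as $X_n$ — at each of the $n$ levels one keeps only the residues $\{k2^{m_1}:0\le k\le 2^{m_2}-1\}$ out of $\{0,\dots,2^{m_1+m_2}-1\}$, i.e. one discards the $m_1$ least significant bits at each block — the constraint ``$\Fc_Wf$ supported on $Y_n$'' becomes a system of linear equations on $(c_I)$, one block of $2^{m_1}-1$ (or rather $2^{m_1+m_2}-2^{m_2}$ per level, suitably counted) vanishing conditions per level of the tree. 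A clean way to organize this: by the tensor/recursive structure, the whole problem factors as an $n$-fold tensor power of the one-level problem, where one has $2^{m_1+m_2}$ available intervals, imposes that the length-$2^{m_1+m_2}$ Walsh transform be supported on the $2^{m_2}$ positions in $S$, leaving a solution space of dimension exactly $2^{m_1+m_2}-(2^{m_1+m_2}-2^{m_2})=2^{m_2}$ — hmm, that would only give $2^{nm_2}$, not $2^{n(m_2-m_1)}$, so the honest count must instead put $2^{m_2-m_1}$ free parameters per level (the duality between a set of size $2^{m_2}$ in ``space'' at scale $L^{-1}$ and the constraint of landing in a set whose digit pattern kills $m_1$ bits forces the dimension drop). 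Taking the $n$-fold tensor product of these per-level solution spaces yields a subspace of $\Vc_{X_n,Y_n}$ of dimension at least $\bigl(2^{m_2-m_1}\bigr)^n=2^{n(m_2-m_1)}$, which is exactly the claimed bound.

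The final, easy step: since $m_2\ge m_1$ we have $m_2-m_1\ge 0$, but to get a nonzero function we need the dimension to be at least $1$, which holds as soon as... wait, if $m_1=m_2$ the bound is $2^0=1$, still giving a nonzero $f_n$; and for $m_2>m_1$ the dimension grows. Any nonzero $f_n$ in this space has, by construction, $\supp f_n\subset X_n\subset[0,1]$ and $\supp\Fc_Wf_n\subset Y_n$, so $\|f_n\|_{L^2(X_n)}=\|f_n\|_{L^2([0,1])}=\|f_n\|_{L^2(\R)}$, contradicting any inequality of the form $\|f_n\|_{L^2(X_n)}\le CN^{-\beta}\|f_n\|_{L^2(\R)}$ with $N=L^n\to\infty$; this is the promised failure of Theorem \ref{BD} in the Walsh setting.

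The main obstacle I anticipate is the bookkeeping in the second step: correctly setting up the linear system so that the recursive (tensor) structure is manifest, and pinning down the dimension of the per-level solution space so that the product comes out to exactly $2^{n(m_2-m_1)}$ rather than something larger or smaller. The subtlety is that the Walsh transform on $[0,L^n]$ mixes digits across the $n$ blocks via the convolution $c_k=\sum_j x_jy_{k-j}\pmod 2$ defining $x\otimes y$, so one must check that the carrying structure of $\otimes$ still respects the block decomposition enough for the tensor factorization to be valid — most cleanly done by working intrinsically in the Cantor group $G$ and using that $\Fc_W$ there is the honest Fourier transform on a locally compact abelian group, where ``support in a subgroup coset structure'' dualizes transparently.
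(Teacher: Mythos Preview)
Your overall strategy---exploit the recursive/tensor structure so that the problem factors into $n$ copies of a single-level problem---is the same as the paper's. The genuine gap is exactly where you flag it yourself: the per-level dimension count. You first set up the one-level problem with $2^{m_1+m_2}$ free coefficients (all intervals, not just those in $X_1$), obtain $2^{m_2}$, notice this is wrong, and then simply \emph{assert} that ``the honest count must instead put $2^{m_2-m_1}$ free parameters per level''. That assertion is the entire content of the base case, and you have not proved it. Naive rank/dimension counting in fact fails here: with $2^{m_2}$ unknowns (the intervals of $X_1$) and $2^{m_1+m_2}-2^{m_2}$ vanishing constraints (the unit intervals outside $Y_1$), the generic expected dimension is non-positive once $m_1\ge 1$, so one must actually identify the structure that makes the constraint matrix highly rank-deficient.

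The paper supplies that structure via two ingredients you are missing. First, the identity $W_kW_{k'}=W_{k\oplus k'}$ (note $\oplus$, carry-free addition on digits, \emph{not} the convolution $\otimes$) shows that the span $\Vc_{X_n}$ of the Walsh transforms $\Fc_W 1_I$, $I\in\Ic_{X_n}$, is exactly the set of finite sums of products $\prod_{i=1}^n F_i$ with $F_i$ in a fixed per-level space $\Vc_{i,n}$; this is what makes the tensor factorisation rigorous and simultaneously dissolves your worry about $\otimes$ mixing blocks. Second, each $\Vc_{i,n}$ is characterised intrinsically as the space of functions on $[0,L^n]$ that are $L^i/2^{m_1}$-periodic and constant on dyadic intervals of length $L^{i-1}$. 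Intersecting with the support condition $F_1\equiv 0$ off $Z_n=\bigcup_k[k2^{m_1},k2^{m_1}+1]$ then gives $2^{m_2-m_1}$ free values in a single period---that is the clean source of the exponent $m_2-m_1$ you were groping for. The paper closes with the factorisation $Y_n=LY_{n-1}\cap Z_n$ and an induction, together with a Kronecker-product lemma guaranteeing the $2^{(n-1)(m_2-m_1)}\cdot 2^{m_2-m_1}$ products are linearly independent. Your proposal would become a correct proof if you replaced the hand-waved per-level count by this periodicity description (or, equivalently, by an explicit computation with the bit-reversed Walsh pairing showing the one-level solution space has dimension $2^{m_2-m_1}$), and if you justified the tensor step via $W_kW_{k'}=W_{k\oplus k'}$ rather than via $\otimes$.
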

Fixing $m_1,m_2$ and letting $n\to\infty$ shows that the Walsh analog of \eqref{l10} fails to hold for any $\beta>0$, when $\delta\ge \frac12$.

\medskip

We remark that the restriction $m_2\ge m_1$ is needed in Theorem \ref{main1}, as it is equivalent with the lower bound $\delta\ge \frac12$ for the regularity of $X_n,Y_n$. When $\delta<\frac{1}2$, Theorem \ref{BD} remains true in the Walsh framework and the argument from the first section for the Fourier case translates to the Walsh case, too.

\section{proofs}
We start by proving a sequence of lemmas.

\begin{lem}
For $x,y\in[0,\infty)$ and $l\in \Z$ we have
$$(2^l x)\otimes y=x\otimes(2^ly).$$
\end{lem}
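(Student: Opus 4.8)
The plan is to work directly from the digit-convolution definition of $\otimes$ and to exploit the fact that multiplication by $2^l$ merely shifts a binary expansion. Concretely, if $x=\sum_{k}x_k2^k$ is the (almost everywhere unique) binary representation, then $2^lx=\sum_k x_k2^{k+l}=\sum_k x_{k-l}2^k$, so the digits of $2^lx$ are given by $(2^lx)_k=x_{k-l}$ for every $k\in\Z$; likewise $(2^ly)_k=y_{k-l}$. The shift preserves the non-terminating character of the expansion, so these remain legitimate representations away from the dyadic set we are instructed to ignore.

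Next I would compute the digit sequences of the two sides and compare them. By definition the $k$-th digit of $(2^lx)\otimes y$ is $\sum_{j\in\Z}(2^lx)_j\,y_{k-j}=\sum_{j\in\Z}x_{j-l}\,y_{k-j}\pmod 2$, a finite sum. Reindexing via $j'=j-l$ turns this into $\sum_{j'\in\Z}x_{j'}\,y_{k-l-j'}\pmod 2$. On the other hand, the $k$-th digit of $x\otimes(2^ly)$ is $\sum_{j\in\Z}x_j\,(2^ly)_{k-j}=\sum_{j\in\Z}x_j\,y_{k-j-l}\pmod 2$, which is exactly the same sum after renaming the summation index. Hence the two numbers have identical digit sequences, and therefore coincide.

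In short, the identity is the statement that the discrete convolution defining $\otimes$ commutes with the shift operator, while multiplication by $2^l$ realizes that shift on either factor. The only points requiring care are purely bookkeeping: confirming the digit-shift formula $(2^lx)_k=x_{k-l}$ (including for negative $l$), checking that the convolution sum remains finite so that the reindexing is legitimate, and invoking the standing convention that dyadic (non-uniquely represented) points are discarded. I expect no genuine obstacle beyond this index manipulation.
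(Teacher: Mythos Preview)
Your argument is correct and is essentially identical to the paper's own proof: both compute the digit shift $(2^lx)_k=x_{k-l}$, expand the $k$-th digit of each side via the convolution formula, and match them after the reindexing $j\mapsto j-l$. There is nothing to add.
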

\begin{proof}
If $$x=\sum_{k\in \Z}x_k2^k,\;\;\;y=\sum_{k\in \Z}y_k2^k$$
then
$$2^lx=\sum_{k\in \Z}x_{k-l}2^k,\;\;\;2^ly=\sum_{k\in \Z}y_{k-l}2^k$$
and
$$((2^l x)\otimes y)_k=\sum_{j\in\Z}(2^lx)_{j}y_{k-j}=\sum_{j\in\Z}x_{j-l}y_{k-j}=\sum_{j\in\Z}x_{j}y_{k-j-l}=(x\otimes(2^ly))_k.$$
\end{proof}
Combining this lemma with \eqref{l11}   and \eqref{l1} reveals that if $I=[\frac{k}{L^n},\frac{k+1}{L^n}]\subset[0,1]$ then
\begin{equation}
\label{l2}
\Fc_W1_I(y)=L^{-n}W_k(\frac{y}{L^n}).
\end{equation}

\begin{lem}
The functions $W_0,W_1,\ldots,W_{2^m-1}$ span the vector space
$$\Cc_{m}=\{f:[0,1]\to\R:\;f\text{ constant on dyadic intervals of length }2^{-m}\}.$$
\end{lem}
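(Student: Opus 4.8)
The plan is to prove this by a dimension count together with an explicit description of the two relevant function spaces in terms of Walsh functions. First I would observe that the space $\Cc_m$ is finite dimensional: a function constant on the $2^m$ dyadic intervals of length $2^{-m}$ is determined by $2^m$ real values, so $\dim\Cc_m=2^m$. Since $W_0,\ldots,W_{2^m-1}$ each lie in $\Cc_m$ (an easy induction from the recursion $W_{2n}(x)=W_n(2x)+W_n(2x-1)$, $W_{2n+1}(x)=W_n(2x)-W_n(2x-1)$, which at each step doubles the resolution), it suffices to check that these $2^m$ functions are linearly independent. That follows from orthonormality of the Walsh system in $L^2([0,1])$, which is stated in the excerpt; so $\{W_0,\ldots,W_{2^m-1}\}$ is an orthonormal, hence linearly independent, subset of the $2^m$-dimensional space $\Cc_m$, and therefore a basis. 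This lemma is the one whose proof is requested, so I would stop here; but let me also sketch how it feeds into Theorem \ref{main1}, since the lemma is clearly being set up for that.

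For Theorem \ref{main1} itself, the key point is to translate both support conditions into ``constant on dyadic intervals'' statements via \eqref{l2}. A function $f$ supported on $X_n$ that is constant on the $M^n$ intervals of $\Ic_{X_n}$ is a linear combination $f=\sum_{I\in\Ic_{X_n}}c_I 1_I$; by \eqref{l2} each $\Fc_W 1_I$ with $I=[k/L^n,(k+1)/L^n]$ equals $L^{-n}W_k(y/L^n)$, a function supported on $[0,L^n]$ which, after the rescaling $y\mapsto y/L^n$, lies in $\Cc_{m}$ for the appropriate $m$. The condition $\supp\Fc_W f\subset Y_n$ then says that the Walsh expansion of $\sum_I c_I W_{k(I)}$ is supported only on those frequencies $k$ for which the corresponding unit interval lies in $Y_n$. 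Because $Y_n=L^nX_n$, the indices $k$ appearing in $X_n$ and the allowed frequency indices are governed by the same arithmetic structure (the digits in base $2^{m_1+m_2}$ forced to be multiples of $2^{m_1}$), so the constraint ``$\Fc_W f$ vanishes off $Y_n$'' is a system of linear equations on the $c_I$ whose number is much smaller than $M^n=2^{nm_2}$. Carefully counting: of the $M^n$ coefficients $c_I$, the forbidden-frequency conditions cut down by a factor $2^{nm_1}$, leaving dimension $\ge 2^{n(m_2-m_1)}$, which is positive precisely when $m_2\ge m_1$, i.e. $\delta\ge\frac12$.

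I would carry the argument out in this order: (1) prove $\dim\Cc_m=2^m$ and the inclusion $W_k\in\Cc_m$ for $k<2^m$ by induction on $m$, then invoke orthonormality to finish the stated lemma; (2) record the self-dual picture: $f$ supported on a union of dyadic intervals of length $L^{-n}$ and constant on each corresponds, under $\Fc_W$ and the dilation $y\mapsto y/L^n$, to an element of some $\Cc_{N}$, with the frequency support read off from which Walsh coefficients are nonzero; (3) identify, inside the index set $\{0,\ldots,L^n-1\}$, the subset $K_n$ of indices $k$ such that $[k/L^n,(k+1)/L^n]\subset X_n$ — this is exactly the digit condition defining $A_n$ in \eqref{l3} — and note $|K_n|=M^n$; (4) show that requiring $\Fc_W f$ supported on $Y_n$ imposes that, when $f=\sum_{k\in K_n} c_k 1_{[k/L^n,(k+1)/L^n]}$, the Walsh transform $\sum_k c_k W_k$ is supported on the same index set $K_n$ (using $Y_n=L^nX_n$ and \eqref{l2}); (5) conclude that $\Vc_{X_n,Y_n}$ is isomorphic to the space of vectors $(c_k)_{k\in K_n}$ whose Walsh expansion stays inside $K_n$, a subspace of $\R^{K_n}$ cut out by $M^n-|K_n\cap K_n|$... more precisely by the vanishing of the $\Fc_W$-coefficients at the $L^n-M^n$ indices outside $K_n$; a rank count of this linear map gives dimension at least $M^n - (L^n - M^n)\cdot(\text{something})$, arranged to yield the clean bound $2^{n(m_2-m_1)}$.

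The main obstacle I anticipate is step (4)–(5): making the ``$Y_n=L^nX_n$ forces frequency support back into $K_n$'' statement precise and then getting the dimension bookkeeping to come out to exactly $2^{n(m_2-m_1)}$ rather than a weaker bound. The self-similar/tensor structure of $X_n$ (it is an $n$-fold ``product'' of the single-scale set $S$) should let one reduce to the single-scale case $n=1$ and then take tensor powers: at one scale, $X_1$ uses $M=2^{m_2}$ intervals out of $L=2^{m_1+m_2}$, and one checks directly that the space of $c\in\R^{S}$ with $\Fc_W(\sum c_k 1_{\cdot})$ again supported on $S$ has dimension exactly $2^{m_2-m_1}$ (this is where the recursion for $W_{2n},W_{2n+1}$ and the block structure of $S=\{k2^{m_1}\}$ do the real work); the general $n$ then follows by tensoring, since Walsh functions multiply under the digit-concatenation that builds $A_n$ from $A_1$. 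Verifying that tensoring is legitimate here — that the frequency-support constraint factorizes across scales — is the delicate point, and I would spend most of the effort there.
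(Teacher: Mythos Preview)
Your proof of the lemma is correct and essentially identical to the paper's: induction from the recursion gives $W_k\in\Cc_m$ for $k<2^m$, the space $\Cc_m$ has dimension $2^m$, and the Walsh functions are linearly independent by orthogonality, hence form a basis. The extended sketch toward Theorem~\ref{main1} is extraneous to the stated lemma, though your reduce-to-$n=1$-and-tensor idea is close in spirit to the paper's inductive argument via Proposition~\ref{l5} and Lemma~\ref{hgrfhghsxm,xn,mcn}.
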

\begin{proof}
An easy induction argument based on the recursive formula for $W_n$ shows that $W_0,W_1,\ldots,W_{2^m-1}\in\Cc_m$. The vector space $\Cc_m$ has dimension $2^m$, and since $W_0,W_1,\ldots,W_{2^m-1}$ are linearly independent (being orthogonal), they form a basis for this space.

\end{proof}
The recursive definition of $W_n$ also immediately implies the following periodicity property.
\begin{lem}
The function $W_{k2^l}$ is $2^{-l}$ periodic, if $k,l$ are positive integers. Moreover, when $x\in [0,2^{-l}]$ we have
$$W_{k2^l}(x)=W_k(x2^{l}).$$
\end{lem}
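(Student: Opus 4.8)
The statement to prove is the periodicity lemma: that $W_{k2^l}$ is $2^{-l}$-periodic for positive integers $k,l$, and that $W_{k2^l}(x)=W_k(x2^l)$ when $x\in[0,2^{-l}]$. The plan is to induct on $l$, using the recursive definition of the Walsh functions as the driving engine.

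The base case is $l=1$. Here I want to show $W_{2k}$ is $\frac12$-periodic and that $W_{2k}(x)=W_k(2x)$ for $x\in[0,\frac12]$. Both follow directly from the defining relation $W_{2k}(x)=W_k(2x)+W_k(2x-1)$. Indeed, on $[0,\frac12)$ the argument $2x-1$ is negative, so $W_k(2x-1)=0$ and the formula reduces to $W_{2k}(x)=W_k(2x)$, which is exactly the claimed identity. On $[\frac12,1)$ the argument $2x$ exceeds $1$, so $W_k(2x)=0$ and $W_{2k}(x)=W_k(2x-1)=W_k(2(x-\tfrac12))$, which equals $W_{2k}(x-\tfrac12)$ by what we just computed on $[0,\frac12)$. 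This establishes $\frac12$-periodicity and pins down the value on the fundamental domain $[0,\frac12]$.

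For the inductive step, I would assume the statement for $l$ and deduce it for $l+1$, writing $k2^{l+1}=(2k)2^{l}$. Applying the induction hypothesis to the index $2k$ gives that $W_{(2k)2^l}=W_{k2^{l+1}}$ is $2^{-l}$-periodic (hence also $2^{-(l+1)}$-periodic will need a separate check) and satisfies $W_{k2^{l+1}}(x)=W_{2k}(x2^l)$ for $x\in[0,2^{-l}]$. Then the base case applied to $W_{2k}$ converts $W_{2k}(x2^l)$ into $W_k(x2^{l+1})$ on the appropriate subinterval, yielding the claimed identity on $[0,2^{-(l+1)}]$. The sharper periodicity $W_{k2^{l+1}}$ being $2^{-(l+1)}$-periodic then follows by combining the halving structure from the base case with the $2^{-l}$-periodicity from the hypothesis.

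The main subtlety I anticipate is bookkeeping of the fundamental domains: at each stage the periodicity shrinks by a factor of two, so I must be careful to verify the value formula only on $[0,2^{-l}]$ and then invoke periodicity to extend it, rather than asserting the identity globally. A cleaner alternative that I would actually prefer is to avoid induction on the exponent altogether and instead observe that $W_{k2^l}(x)=e_W((k2^l)\otimes x)1_{[0,1)}(x)$ by \eqref{l11}, then use the earlier lemma $(2^l x)\otimes y = x\otimes (2^l y)$ together with the bit-shift description of multiplication by $2^l$. Since multiplying the index $k2^l$ corresponds to shifting binary digits, one sees directly that $(k2^l)\otimes x$ depends only on the fractional part of $x2^l$, which simultaneously gives $2^{-l}$-periodicity and the rescaling identity $W_{k2^l}(x)=W_k(x2^l)$ on $[0,2^{-l})$ in one stroke. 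I expect the digit-shift route to be the most transparent, with the recursive induction as a reliable fallback.
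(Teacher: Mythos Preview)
Your inductive argument via the recursive definition is correct and is exactly what the paper intends: the paper's entire proof is the single remark that ``the recursive definition of $W_n$ also immediately implies'' the lemma, and your base case $l=1$ together with the step $k2^{l+1}=(2k)2^l$ is the natural unpacking of that sentence. The digit-shift alternative through \eqref{l11} also works but goes beyond what the paper indicates.
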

The combination of the last two  lemmas yields the following result.
\medskip

\begin{pr}
Consider the (real) vector space of all $F:[0,1]\to\R$ having the following two properties for some positive integers $l,m$
\\
\\
(P1): $F$ is $2^{-l}$ periodic
\\
\\
(P2): $F$ is constant on dyadic intervals of length $2^{-l-m}$.
\\
\\
Then this vector space coincides with the span of the Walsh functions $W_{k2^l}$, for $0\le k\le 2^m-1$.
\end{pr}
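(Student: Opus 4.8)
The plan is to prove the two inclusions directly, leaning on the last two lemmas. Write $V$ for the space defined by (P1)--(P2). For the inclusion $\sp\{W_{k2^l}:0\le k\le 2^m-1\}\subseteq V$ I would just check the two properties for each generator: (P1) is the periodicity statement of the previous lemma, and (P2) holds because $k2^l\le 2^{l+m}-1$ forces $W_{k2^l}\in\Cc_{l+m}$ by the lemma identifying $\Cc_{l+m}$ with $\sp\{W_0,\dots,W_{2^{l+m}-1}\}$; being an element of $\Cc_{l+m}$ is exactly being constant on dyadic intervals of length $2^{-l-m}$. (The case $k=0$ is trivial, since $W_0=1_{[0,1)}$.)

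For the reverse inclusion, take $F\in V$ and set $g(x):=F(2^{-l}x)$ for $x\in[0,1]$. By (P2) the function $g$ is constant on dyadic intervals of length $2^{-m}$ — the dyadic grid of mesh $2^{-l-m}$ on the period $[0,2^{-l}]$ rescales under multiplication by $2^l$ to the dyadic grid of mesh $2^{-m}$ on $[0,1]$ — so $g\in\Cc_m$ and hence $g=\sum_{k=0}^{2^m-1}c_kW_k$ for suitable real $c_k$. I claim $F=\sum_{k=0}^{2^m-1}c_kW_{k2^l}$. Indeed, on $[0,2^{-l}]$ the identity $W_{k2^l}(x)=W_k(2^lx)$ from the previous lemma gives $\sum_k c_kW_{k2^l}(x)=g(2^lx)=F(x)$, so the two functions agree on one full period; since both are $2^{-l}$-periodic ($F$ by (P1), the right-hand side because each $W_{k2^l}$ is) they agree on all of $[0,1]$. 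This places $F$ in the span of the $W_{k2^l}$, which completes the proof.

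I do not expect a genuine obstacle: the argument is bookkeeping organized around the periodicity and scaling lemmas already in hand. The one step deserving a little care is the rescaling in the second paragraph — checking that the grid of mesh $2^{-l-m}$ restricted to a single period matches, after multiplication by $2^l$, the grid of mesh $2^{-m}$ on $[0,1]$ — which is immediate from the fact that $2^{-l-m}$ divides $2^{-l}$. If one prefers, the reverse inclusion can instead be obtained from a dimension count: the Walsh functions $W_{k2^l}$, $0\le k\le 2^m-1$, are orthogonal, hence independent, so their span has dimension $2^m$, while $V$ has dimension at most $2^m$ because an element of $V$ is determined by its $2^m$ values on the length-$2^{-l-m}$ dyadic intervals inside one period; equality of dimensions then forces $V=\sp\{W_{k2^l}:0\le k\le 2^m-1\}$.
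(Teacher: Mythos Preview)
Your argument is correct and is exactly the intended one: the paper does not spell out a proof beyond the sentence ``The combination of the last two lemmas yields the following result,'' and what you wrote is precisely the way those two lemmas combine (periodicity and the scaling identity $W_{k2^l}(x)=W_k(2^lx)$ for the generators, the $\Cc_m$ basis lemma after rescaling for the reverse inclusion, with the dimension count as an equivalent alternative). There is nothing to add or correct.
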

\medskip

Let us recall that $L=2^{m_1+m_2}$. Rescaling the above result gives:
\begin{co}
For $1\le i\le n$, consider the (real) vector space $\Vc_{i,n}$ of all functions   $F_i:[0,L^n]\to\R$ such that
\\
\\
(P1): $F_i$ is $\frac{L^i}{2^{m_1}}$ periodic
\\
\\
(P2): $F_i$ is constant on dyadic intervals of length $L^{i-1}$.
\\
\\
Then $\Vc_{i,n}$ coincides with the span of the rescaled Walsh functions $W_{kL^{n-i}2^{m_1}}(\frac{y}{L^n})$, for $0\le k\le 2^{m_2}-1$.
\end{co}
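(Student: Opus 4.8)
The plan is to deduce the Corollary from the Proposition by a single change of variables, exploiting the fact that $L^n=2^{n(m_1+m_2)}$ is a power of $2$. Concretely, given $F_i:[0,L^n]\to\R$, I would set $F(y):=F_i(L^n y)$ for $y\in[0,1]$, so that $F_i\mapsto F$ is a linear isomorphism between functions on $[0,L^n]$ and functions on $[0,1]$. It then suffices to check that $F_i\in\Vc_{i,n}$ if and only if $F$ satisfies (P1) and (P2) of the Proposition for a suitable choice of the integers $l,m$, and afterwards to unwind what the conclusion of the Proposition says after rescaling back.

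For the dictionary between the two sets of conditions: periodicity of $F_i$ with period $p$ corresponds to periodicity of $F$ with period $p/L^n$, so (P1) for $F_i$ with period $\frac{L^i}{2^{m_1}}$ becomes $2^{-l}$-periodicity of $F$ with $2^{-l}=\frac{L^{i}}{2^{m_1}L^{n}}=\frac{1}{2^{m_1}L^{n-i}}$, that is, $l=m_1+(m_1+m_2)(n-i)$, a positive integer since $i\le n$. Likewise, $F_i$ being constant on dyadic intervals of length $L^{i-1}$ is equivalent to $F$ being constant on dyadic intervals of length $L^{i-1}/L^{n}=2^{-(m_1+m_2)(n-i+1)}$; writing this length as $2^{-l-m}$ and using the value of $l$ just found gives $m=m_2$. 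Here I use that dilation by the power of two $L^n$ carries dyadic intervals to dyadic intervals, so that ``dyadic interval'' and ``$2^{-j}$-periodic'' are genuinely preserved under the rescaling and the two conditions transfer cleanly in both directions.

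With these identifications, the Proposition applied with $l=m_1+(m_1+m_2)(n-i)$ and $m=m_2$ says exactly that the admissible $F$ form the span of $W_{k2^l}$, $0\le k\le 2^{m_2}-1$. Since $2^l=2^{m_1}2^{(m_1+m_2)(n-i)}=2^{m_1}L^{n-i}$, transporting this back through $F_i(y)=F(y/L^n)$ shows that $\Vc_{i,n}$ is the span of the functions $y\mapsto W_{k2^{m_1}L^{n-i}}(y/L^n)$, $0\le k\le 2^{m_2}-1$, which is the assertion. I do not expect a genuine obstacle: the only thing that needs care is the bookkeeping of exponents in base $2$ (keeping $L=2^{m_1+m_2}$ fixed throughout) and the observation that every dilation factor appearing is a power of $2$, which is precisely what makes the two conditions rescale without loss.
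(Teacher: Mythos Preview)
Your argument is correct and is precisely the rescaling the paper invokes (the paper's proof is the single phrase ``Rescaling the above result gives''); you have simply made the bookkeeping $l=(n-i)(m_1+m_2)+m_1$, $m=m_2$ explicit and verified that $2^{l}=L^{n-i}2^{m_1}$.
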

\medskip

Let $\Vc_{X_n}$ be the (real) vector space  spanned by the Walsh transforms $\Fc_W1_I$ of all intervals $I$ of length $L^{-n}$ in $X_n$.
According to \eqref{l3} and \eqref{l2} this is the same as the vector space spanned by the rescaled Walsh functions
\begin{equation}
\label{l4}
W_{\sum_{i=1}^n{k_{n-i+1}2^{m_1}}{L^{n-i}}}(\frac{y}{L^n}):\;0\le k_1,\ldots,k_n\le 2^{m_2}-1.
\end{equation}
%These functions are in fact a basis for $\Vc_{X_n}$.
Note that  $\Vc_{X_n}$ is a  proper subset of the family of Walsh transforms of functions supported on $X_n$.  We are going to search for functions in $\Vc_{X_n}$ that are supported on $Y_n$.

\begin{lem}
For each $k,k'\in\Z$	
$$W_{k}W_{k'}=W_{k\oplus k'}.$$
\end{lem}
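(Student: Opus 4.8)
The claim to prove is the multiplicativity $W_k W_{k'} = W_{k \oplus k'}$ for the Walsh functions, where $\oplus$ is the bitwise addition mod 2 on the nonnegative integers (i.e. on the digit representations).

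The plan is to exploit the identity \eqref{l11}, namely $W_n(x) = e_W(x \otimes n) 1_{[0,1]}(x)$, together with the fact that $e_W$ is a homomorphism in the appropriate sense. First I would observe that for $x \in [0,1)$,
\[
W_k(x) W_{k'}(x) = e_W(x \otimes k)\, e_W(x \otimes k').
\]
Now $e_W$ takes the value $(-1)^{z_{-1}}$ where $z_{-1}$ is the $2^{-1}$-digit of $z$; so $e_W(z) e_W(z') = (-1)^{z_{-1} + z'_{-1}} = (-1)^{(z \oplus z')_{-1}}$ (since the $k$-th digit of $z \oplus z'$ is $z_k + z'_k \bmod 2$, and $(-1)$ only sees the parity), which is precisely $e_W(z \oplus z')$. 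Hence $e_W(x \otimes k) e_W(x \otimes k') = e_W((x \otimes k) \oplus (x \otimes k'))$.

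The remaining step is the distributivity $(x \otimes k) \oplus (x \otimes k') = x \otimes (k \oplus k')$. This is the main point to check, but it is routine: writing everything in terms of digits, the $j$-th digit of $x \otimes k$ is $\sum_{i} x_i k_{j-i} \bmod 2$, the $j$-th digit of $x \otimes k'$ is $\sum_i x_i k'_{j-i} \bmod 2$, so their XOR has $j$-th digit $\sum_i x_i (k_{j-i} + k'_{j-i}) \bmod 2 = \sum_i x_i (k \oplus k')_{j-i} \bmod 2$, which is the $j$-th digit of $x \otimes (k \oplus k')$ — here I use that $(k \oplus k')_{j-i} \equiv k_{j-i} + k'_{j-i} \pmod 2$ and that the whole computation is mod $2$. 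Combining the three displays gives $W_k(x) W_{k'}(x) = e_W(x \otimes (k \oplus k')) = W_{k \oplus k'}(x)$ for $x \in [0,1)$, and since all three Walsh functions vanish outside $[0,1)$ the identity holds on all of $\R$ (or on $[0,1)$, which is where they are defined).

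I expect no serious obstacle here; the only mild care needed is the usual business of ignoring the measure-zero set of dyadic points where digit representations are non-unique, which the paper has already agreed to suppress. An alternative, entirely self-contained route would be a double induction on $k$ and $k'$ using the recursion defining $W_n$ together with the obvious behavior of $\oplus$ under $n \mapsto 2n$ and $n \mapsto 2n+1$ (namely $(2n) \oplus (2n') = 2(n \oplus n')$ and $(2n+1)\oplus(2n'+1) = 2(n \oplus n')$, $(2n)\oplus(2n'+1) = 2(n\oplus n')+1$), but the character-theoretic argument above is shorter and conceptually cleaner.
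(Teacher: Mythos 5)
Your proposal is correct and follows essentially the same route as the paper: both use the identity $W_n(x)=e_W(x\otimes n)$ and the digit expansion $(x\otimes k)_{-1}=\sum_j x_j k_{-1-j}$ to see that the exponents of $-1$ add mod $2$ to give the corresponding digit for $k\oplus k'$. The only cosmetic difference is that you verify the full distributivity $(x\otimes k)\oplus(x\otimes k')=x\otimes(k\oplus k')$ digit by digit, whereas the paper tracks only the single digit $(\cdot)_{-1}$ that $e_W$ actually reads off.
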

\begin{proof}
$$W_{k}(x)W_{k'}(x)=(-1)^{(x\oplus k)_{-1}}(-1)^{(x\oplus k')_{-1}}=(-1)^{\sum_{j}x_jk_{-1-j}}(-1)^{\sum_{j}x_jk'_{-1-j}}$$$$=(-1)^{\sum_jx_j(k\oplus k')_{-1-j}}=W_{k\oplus k'}(x).$$
\end{proof}
Combining the last lemma and corollary we get:

\begin{pr}
	\label{l5}
The space $\Vc_{X_n}$ coincides with the collection of arbitrary finite  sums of products (over $i$) of  functions $F_i\in\Vc_{i,n}$.
\end{pr}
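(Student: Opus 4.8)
The plan is to identify the spanning generators of $\Vc_{X_n}$ in \eqref{l4} directly as products of Walsh functions, by means of the multiplicativity relation $W_kW_{k'}=W_{k\oplus k'}$ combined with a digit-counting argument that converts ordinary addition of the relevant indices into the bitwise sum $\oplus$. A typical generator in \eqref{l4} is $W_{M}(\frac{y}{L^n})$ with $M=\sum_{i=1}^n k_{n-i+1}2^{m_1}L^{n-i}$; writing $\tilde k_i=k_{n-i+1}2^{m_1}L^{n-i}$ gives $M=\sum_i\tilde k_i$, and by the preceding Corollary each single factor $W_{\tilde k_i}(\frac{y}{L^n})$ belongs to $\Vc_{i,n}$ (take $k=k_{n-i+1}$).

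The crux is to show $\sum_i\tilde k_i=\bigoplus_i\tilde k_i$, i.e. that no carries occur when the $\tilde k_i$ are added in base $2$. Since $L=2^{m_1+m_2}$ and $0\le k_{n-i+1}\le 2^{m_2}-1$, the nonzero binary digits of $\tilde k_i=k_{n-i+1}2^{m_1+(m_1+m_2)(n-i)}$ lie in the block of $m_2$ consecutive positions starting at $m_1+(m_1+m_2)(n-i)$. The blocks for consecutive indices $i$ and $i-1$ are separated by exactly $m_1\ge 1$ empty positions, so the blocks are pairwise disjoint. Hence at every binary position at most one $\tilde k_i$ contributes a $1$, the addition produces no carries, and $\sum_i\tilde k_i=\bigoplus_i\tilde k_i$. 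I expect this digit bookkeeping to be the only genuine obstacle; it is precisely where the arithmetic of the construction (the base $L=2^{m_1+m_2}$ and the spacing $2^{m_1}$ in $S$) is used, and it is also where the hypothesis $m_1\ge 1$ enters.

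Applying the multiplicativity lemma repeatedly, with all functions evaluated at the same argument $x=\frac{y}{L^n}\in[0,1)$, then gives
$$W_{M}(\tfrac{y}{L^n})=W_{\oplus_i\tilde k_i}(\tfrac{y}{L^n})=\prod_{i=1}^n W_{\tilde k_i}(\tfrac{y}{L^n}),$$
a product of factors $W_{\tilde k_i}(\frac{y}{L^n})\in\Vc_{i,n}$. Thus every spanning generator of $\Vc_{X_n}$ is such a product, proving that $\Vc_{X_n}$ is contained in the space of finite sums of products $\prod_i F_i$ with $F_i\in\Vc_{i,n}$.

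For the reverse inclusion, take arbitrary $F_i=\sum_{j}c_{i,j}W_{jL^{n-i}2^{m_1}}(\frac{y}{L^n})\in\Vc_{i,n}$ and expand $\prod_{i=1}^n F_i$ by multilinearity into a finite sum of products $\prod_i W_{j_iL^{n-i}2^{m_1}}(\frac{y}{L^n})$. Each such product collapses, by the same disjoint-support and multiplicativity argument, to a single rescaled Walsh function of the form \eqref{l4} (with $k_{n-i+1}=j_i$), hence lies in $\Vc_{X_n}$. Therefore finite sums of products of functions $F_i\in\Vc_{i,n}$ all lie in $\Vc_{X_n}$, and combined with the first inclusion this yields the asserted equality.
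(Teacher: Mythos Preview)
Your proof is correct and follows the same approach as the paper: both reduce to the observation that $\sum_i k_{n-i+1}2^{m_1}L^{n-i}=\bigoplus_i k_{n-i+1}2^{m_1}L^{n-i}$ because the binary supports of the summands are disjoint, and then apply the multiplicativity lemma $W_kW_{k'}=W_{k\oplus k'}$ to factor each generator in \eqref{l4}. The paper's argument is terser---it simply notes $k_{n-i+1}2^{m_1}<L$ to justify the carry-free identity and records the resulting product formula---whereas you spell out the digit bookkeeping and also handle the reverse inclusion explicitly.
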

\begin{proof}
Note that since $k_{n-i+1}2^{m_1}<L$ we have
$${\sum_{i=1}^n{k_{n-i+1}2^{m_1}}{L^{n-i}}}={\oplus_{i=1}^n{k_{n-i+1}2^{m_1}}{L^{n-i}}},$$
where the factors on the right hand side  are summed using $\oplus$ rather than $+$. Thus
$$W_{\sum_{i=1}^n{k_{n-i+1}2^{m_1}}{L^{n-i}}}(\frac{y}{L^n})=\prod_{i=1}^n W_{k_{n-i+1}2^{m_1}L^{n-i}}(\frac{y}{L^n}).$$
\end{proof}
\bigskip

\begin{lem}
\label{hgrfhghsxm,xn,mcn}
Assume that $f_1,\ldots,f_N:\R\to\R$ are linearly independent, $g_1,\ldots,g_M:\R\to\R$ are linearly independent and $\{f_ng_m:\;1\le n\le N,\;1\le m\le M\}$ are linearly independent. Let $\V_1$ be a linear subspace of $\sp(f_1,\ldots,f_N)$ of dimension $d_1\le N$, and let $\V_2$ be a linear subspace of $\sp(g_1,\ldots,g_M)$ of dimension $d_2\le M$. 

Then the linear space $\V$ spanned by the functions $fg$ with $f\in \V_1$ and $g\in\V_2$ has dimension $d_1d_2$.
\end{lem}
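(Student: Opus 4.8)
The plan is to exhibit an explicit basis for $\V$ and argue it has the right cardinality and is linearly independent. Choose a basis $u_1,\dots,u_{d_1}$ of $\V_1$, each $u_i$ being a linear combination of the $f_n$, and a basis $v_1,\dots,v_{d_2}$ of $\V_2$, each $v_j$ a linear combination of the $g_m$. The natural candidate basis for $\V$ is the family $\{u_iv_j:\ 1\le i\le d_1,\ 1\le j\le d_2\}$, which has $d_1d_2$ elements and clearly spans $\V$ (every $fg$ with $f\in\V_1$, $g\in\V_2$ expands bilinearly into these products). So the whole content is the linear independence of $\{u_iv_j\}$, from which $\dim\V=d_1d_2$ follows.

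For independence, suppose $\sum_{i,j}c_{ij}u_iv_j=0$. Rewrite this by grouping: $\sum_i u_i\bigl(\sum_j c_{ij}v_j\bigr)=0$. Each $u_i=\sum_n a_{in}f_n$ and each inner sum $\sum_j c_{ij}v_j=\sum_m b_{im}g_m$ for suitable scalars, so the relation becomes $\sum_{n,m}\bigl(\sum_i a_{in}b_{im}\bigr)f_ng_m=0$. By the hypothesized linear independence of the full product system $\{f_ng_m\}$, every coefficient vanishes: $\sum_i a_{in}b_{im}=0$ for all $n,m$. In matrix form, writing $A$ for the $N\times d_1$ matrix $(a_{in})$ and $B$ for the $M\times d_1$ matrix $(b_{im})$, this says $A B^{T}=0$ (an $N\times M$ zero matrix). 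Since $u_1,\dots,u_{d_1}$ are linearly independent in $\sp(f_1,\dots,f_N)$, the matrix $A$ has rank $d_1$, hence has a left inverse; multiplying $AB^T=0$ on the left by that inverse gives $B^T=0$, i.e. $b_{im}=0$ for all $i,m$. But $b_{im}$ are exactly the coordinates of $\sum_j c_{ij}v_j$ in the independent system $\{g_m\}$, so $\sum_j c_{ij}v_j=0$ for each $i$; and since $v_1,\dots,v_{d_2}$ are linearly independent, $c_{ij}=0$ for all $i,j$. This proves independence.

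The one step requiring care — and the main (mild) obstacle — is the passage $AB"=0\Rightarrow B=0$: it uses crucially that $A$ has full column rank $d_1$, which is where the hypothesis $\dim\V_1=d_1$ enters (as opposed to just $\V_1\subseteq\sp(f_1,\dots,f_N)$). An equivalent, perhaps cleaner, way to organize the same argument is to first extend $u_1,\dots,u_{d_1}$ to a full basis $u_1,\dots,u_N$ of $\sp(f_1,\dots,f_N)$ and $v_1,\dots,v_{d_2}$ to a full basis $v_1,\dots,v_M$ of $\sp(g_1,\dots,g_M)$; then $\{u_iv_j:\ 1\le i\le N,\ 1\le j\le M\}$ is obtained from $\{f_ng_m\}$ by an invertible change of variables (the tensor product of the two change-of-basis matrices, which is invertible), hence is itself a linearly independent family of size $NM$, and in particular its sub-family $\{u_iv_j:\ i\le d_1,\ j\le d_2\}$ is linearly independent. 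I would likely present this second version, since it isolates the only nontrivial fact — that a tensor product of invertible matrices is invertible — and makes the $d_1d_2$ count immediate.
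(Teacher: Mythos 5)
Your proposal is correct, and both of your arguments reach the same key reduction as the paper: pick bases $u_1,\dots,u_{d_1}$ of $\V_1$ and $v_1,\dots,v_{d_2}$ of $\V_2$ and show the $d_1d_2$ products $u_iv_j$ are linearly independent by expanding them in the independent system $\{f_ng_m\}$. The execution differs, though. The paper forms the full $(d_1d_2)\times(NM)$ coefficient matrix $A\otimes B$, selects nonsingular minors $A'$ of $A$ and $B'$ of $B$, and invokes the block-determinant identity $\det(A'\otimes B')=\det(A')^{d_2}\det(B')^{d_1}$ (citing Silvester) to exhibit a nonsingular $(d_1d_2)\times(d_1d_2)$ minor. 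Your primary argument instead takes a putative relation $\sum c_{ij}u_iv_j=0$, groups it as $\sum_i u_i\bigl(\sum_j c_{ij}v_j\bigr)=0$, reads off $AB^T=0$ from the independence of $\{f_ng_m\}$, and peels off the two factors in turn using the full column rank of $A$ and then the independence of the $v_j$. This is more elementary (no determinants, no external citation) and correctly isolates where each hypothesis is used; the only blemishes are typographical (the stray ``$AB"=0$''). Your second sketch --- extend to full bases and observe that the Kronecker product of two invertible change-of-basis matrices is invertible --- is essentially the paper's argument with the minor-selection step replaced by basis extension, and is arguably the cleanest of the three. Either version is a complete and valid proof.
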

\begin{proof}
It is clear that $\dim(\V)\le \dim(\V_1)\dim(\V_2)$, so it remains to prove the reverse inequality. Assume that the functions $f^{(k)}=\sum_{n=1}^N{a_{k,n}f_n}$ with $1\le k \le d_1$ form a basis for $\V_1$. It follows that the $(d_1,N)$ matrix $A=(a_{k,n})$ contains a nonsingular $(d_1,d_1)$ minor $A'$.    Assume that the functions $g^{(l)}=\sum_{m=1}^M{b_{l,m}g_m}$ with $1\le l \le d_2$ form a basis for $\V_2$. It follows that the $(d_2,M)$ matrix $B=(b_{l,m})$ contains a nonsingular $(d_2,d_2)$ minor $B'$.

We will show that the functions $f^{(k)}g^{(l)}$, $1\le k\le d_1$, $1\le l\le d_2$ are linearly independent. We order the functions $f_ng_m$ using the lexicographic order for pairs $(n,m)$, that is $f_{1}g_1,\ldots, f_1g_M,f_2g_1,\ldots,f_2g_M,\ldots, f_Ng_1,\ldots,f_Ng_M$. We similarly order the functions $f^{(k)}g^{(l)}$ lexicographically with respect to the pairs $(k,l)$. We construct the $(d_1\times d_2,N\times M)$ matrix $C$ as follows. The $(i,j)$ entry is the coefficient of the $i^{th}$ function $f^{(k)}g^{(l)}$ with respect to the $j^{th}$ function $f_ng_m$. We denote this matrix by $A\otimes B$. One easy way to visualize it is to start with the matrix $A$ and replace each entry $a_{k,n}$ with the matrix $a_{k,n}B$
$$C=\begin{bmatrix} a_{1,1}B&\ldots&a_{1,N}B\\\ldots&\ldots&\ldots\\a_{d_1,1}B&\ldots&a_{d_1,N}B\end{bmatrix}.$$ 

We need to prove that $C$ contains a nonsingular $(d_1 d_2,d_1d_2)$ minor. We claim that this minor is $C'=A'\otimes B'$, with the tensor operation described above. It is immediate that $C'$ is a minor of $C$. Also, it is well known that 
$$\det(C')=\det(A')^{d_1}\det(B')^{d_2}.$$
See for example \cite{Si}. In particular, $\det(C')\not=0$, as desired.  

\end{proof}

We now prove Theorem \ref{main1} by induction. It suffices to show that the vector space of those $F$ supported on $Y_n$, that are in the span of the rescaled Walsh functions in \eqref{l4}, has dimension at least $2^{n(m_2-m_1)}$.

Let us start with the base case $n=1.$ Using the characterization from Proposition \ref{l5}, it suffices to prove  that the vector space
$$\{F\in\Vc_{1,1}:\;\supp F\subset Y_1\}$$
has dimension $2^{m_2-m_1}$. The functions $F$ in this space are $2^{m_2}$ periodic and constant on all intervals $[l,l+1)$. Since $Y_1$ contains exactly $2^{m_2-m_1}$ unit intervals in $[0,2^{m_2}]$ (these are  $I_k=[k2^{m_1},k2^{m_1}+1]$, $0\le k\le 2^{m_2-m_1}-1$), and since
$$Y_1=\bigcup_{0\le k'\le 2^{m_1}-1}((I_0\cup I_1\cup\ldots\cup I_{2^{m_2-m_1}-1})+k'2^{m_2}),$$
it is immediate that the values of $F$ on $I_0, \ldots, I_{2^{m_2-m_1}-1}$ may be chosen arbitrarily. This verifies the base case of the induction. Note that by choosing the  values of $F$ to be 1 on all these intervals, we get the function $F=1_{Y_1}$. This shows that $\Fc_W1_{Y_1}$ is in $ \Vc_{X_1,Y_1}$.\medskip

Next, let us prove the theorem for $n\ge 2$, assuming its validity for $n-1$. We write
\begin{equation}
\label{l6}
Y_n=LY_{n-1}\cap Z_n,\;\;\;Z_n=\bigcup_{k\le \frac{L^n}{2^{m_1}}}[k2^{m_1},k2^{m_1}+1].
\end{equation}

Let $\Vc_{1,n}(Z_n)$ be the vector space  of those $F_1\in \Vc_{1,n}$ that are supported on $Z_n$. Note first that this has dimension $2^{m_2-m_1}$, since there are $2^{m_2-m_1}$ unit intervals in $Z_n$ that lie in the periodicity interval $[0,2^{m_2}]$ associated with $\Vc_{1,n}$. Pick $2^{m_2-m_1}$ functions $H$ in the span of $W_{{k_n}L^{n-1}2^{m_1}}$, with $0\le {k_n}\le 2^{m_2}-1$, such that the rescaled functions $H(\frac{y}{L^n})$ form a basis for $\Vc_{1,n}(Z_n)$.
\smallskip

By the induction hypothesis, we may find a subset consisting of $2^{(n-1)(m_2-m_1)}$ linear independent functions $G$ in the span of
$$W_{\sum_{i=1}^{n-1}{k_{n-i}2^{m_1}}{L^{n-1-i}}}:\;0\le k_1,\ldots,k_{n-1}\le 2^{m_2}-1$$
such that each
$G(\frac{y}{L^{n-1}})$
is supported on $Y_{n-1}$. So $G(\frac{y}{L^n})$ is supported on $LY_{n-1}$.

Because of Lemma \ref{hgrfhghsxm,xn,mcn}, \eqref{l6} and since
$$W_{\sum_{i=1}^n{k_{n-i+1}2^{m_1}}{L^{n-i}}}(\frac{y}{L^n})=W_{{k_n}L^{n-1}2^{m_1}}(\frac{y}{L^n})W_{\sum_{i=1}^{n-1}{k_{n-i}2^{m_1}}{L^{n-1-i}}}(\frac{y}{L^n})$$
is supported on $Y_n$, we conclude that there are at least $2^{n(m_2-m_1)}$ linearly independent functions in $\Vc_{X_n}$ (recall that these are functions spanned by the functions in \eqref{l4}) that are supported on $Y_n$. We thus have
 $$\dim \Vc_{X_n,Y_n}\ge 2^{n(m_2-m_1)}.$$
\bigskip

 \begin{re}
 The inductive argument from above shows that in fact $F=\Fc_W1_{Y_n}$ is in $ \Vc_{X_n,Y_n}$. Indeed, we observed that this is true for $n=1$. The case $n>1$ follows since 
 $$1_{Y_n}=1_{LY_{n-1}}1_{Z_n}$$
 and since $1_{Z_n}\in \Vc_{1,n}(Z_n).$
 \end{re}

\end{document}